\newtheorem{theorem}{Theorem}[section]
\newtheorem{corollary}[theorem]{Corollary}
\newtheorem{claim}[theorem]{Claim}
\newtheorem{lemma}[theorem]{Lemma}
\theoremstyle{definition}
\theoremstyle{remark}
\newtheorem{remark}[theorem]{Remark}
\begin{document}
\title[Scalar curvature and harmonic forms on manifolds with boundary]{Scalar curvature and harmonic one-forms on three-manifolds with boundary}
\author[H. Bray]{Hubert L. Bray}
\address{
Department of Mathematics\\
Duke University\\
Durham, NC, 27708\\
USA}
\email{bray@math.duke.edu}
\author[D. Stern]{Daniel L. Stern}
\address{
Department of Mathematics\\
University of Toronto\\
Toronto, ON, M5S 2E4\\
Canada
}
\email{dl.stern@utoronto.ca}

\maketitle

\begin{abstract} For a homotopically energy-minimizing map $u: N^3\to S^1$ on a compact, oriented $3$-manifold $N$ with boundary, we establish an identity relating the average Euler characteristic of the level sets $u^{-1}\{\theta\}$ to the scalar curvature of $N$ and the mean curvature of the boundary $\partial N$. As an application, we obtain some natural geometric estimates for the Thurston norm on $3$-manifolds with boundary, generalizing results of Kronheimer-Mrowka and the second named author from the closed setting. By combining these techniques with results from minimal surface theory, we obtain moreover a characterization of the Thurston norm via scalar curvature and the harmonic norm for general closed, oriented three-manifolds, extending Kronheimer and Mrowka's characterization for irreducible manifolds to arbitrary topologies.
\end{abstract}

\section{Introduction}

The study of scalar curvature and its relation to the global geometry and topology of Riemannian manifolds is a subject of long-standing interest in differential geometry and general relativity. Historically, two key techniques have been used to relate scalar curvature to the global structure of a manifold--the Dirac operator methods originating in the work of Lichnerowicz \cite{Lich} (and further developed by Hitchin \cite{Hitch}, Witten \cite{Wit}, Gromov-Lawson \cite{GL1}-\cite{GL3}, and others), and the minimal hypersurface methods pioneered by Schoen and Yau \cite{SY.psc.1},\cite{SY.psc.2},\cite{SY.pmt} in the late 1970s. 

In \cite{St19}, the second named author introduced a new tool for relating scalar curvature to the global structure of closed $3$-manifolds, based on the study of harmonic maps to $S^1=\mathbb{R}/\mathbb{Z}$--or equivalently, harmonic one-forms with integral periods. Namely, for the level sets $\Sigma_{\theta}=u^{-1}\{\theta\}$ of a nonconstant harmonic circle-valued map $u: M\to S^1$ on a closed, oriented $3$-manifold, it was shown in \cite{St19} that the average Euler characteristic $\int_{\theta\in S^1}\chi(\Sigma_{\theta})$ satisfies the lower bound
\begin{equation}\label{ds.eq}
2\pi\int_{\theta\in S^1}\chi(\Sigma_{\theta})\geq \int_{\theta\in S^1}\int_{\Sigma_{\theta}}\frac{1}{2}(|du|^{-2}|Hess(u)|^2+R_M),
\end{equation}
where $R_M$ is the scalar curvature of $M$. This identity--which resembles an averaged version of a key identity for stable minimal surfaces (cf. \cite{SY.psc.1})--gives rise to elementary new proofs of several geometric inequalities and rigidity results related to the scalar curvature in dimension three, such as the fact that any metric on $T^3$ of nonnegative scalar curvature must be flat.

In the present paper, we extend these ideas in two significant directions. First, we establish an appealing generalization of the identity \eqref{ds.eq} for homotopically energy-minimizing maps on $3$-manifolds with boundary, which incorporates the \emph{mean curvature} of the boundary in a natural way. This identity recovers in a straightforward way several known results about the structure of compact $3$-manifolds with lower bounds on the interior scalar curvature and boundary mean curvature. Here we emphasize applications to the study of the Thurston norm on compact $3$-manifolds with boundary, generalizing some results of Kronheimer-Mrowka \cite{KM} and the second named author \cite{St19} to manifolds with boundary.

Second, we combine our methods with some celebrated results of Meeks and Yau from minimal surface theory \cite{MY} to improve the results of \cite{St19} in the setting of closed $3$-manifolds. Namely, while the techniques of \cite{St19} are sensitive to the presence of nonseparating minimal spheres in a given closed $3$-manifold $M$, we circumvent this issue by cutting $M$ along a collection of minimal spheres generating the spherical portion of $H_2(M;\mathbb{Z})$, and apply our techniques to the resulting $3$-manifold $N$ with minimal boundary. As a result, we obtain a geometric characterization of the Thurston norm on $H^1(M;\mathbb{Z})$ in terms of the scalar curvature and the harmonic norm on \emph{general closed, oriented $3$-manifolds}, extending the results of \cite{KM} and \cite{St19} to arbitrary topologies.

In what follows, let $(N^3,g)$ be a compact, connected, oriented $3$-manifold with boundary $\partial N$. Recall that for any homotopy class $[u]\in [N:S^1]$ of circle-valued maps on $N$, standard Hodge theory--applied to the cohomology class $[u^*(d\theta)]\in H^1_{dR}(N)$--yields the existence of an energy-minimizing representative $u: N\to S^1=\mathbb{R}/\mathbb{Z}$, unique up to constant rotations. 

It is straightforward to check that a map $u:N\to S^1$ minimizes energy in its homotopy class if and only if the gradient one-form $h=u^*(d\theta)$ is \emph{harmonic}
\begin{equation}\label{harm}
dh=d^*h=0\text{ on }N
\end{equation}
and satisfies the \emph{homogeneous Neumann condition}
\begin{equation}\label{neu}
\langle h,\nu\rangle=0\text{ on }\partial N,
\end{equation}
on the boundary, where $\nu$ denotes the outward unit normal to $\partial N$. In Section \ref{main.comp} below, we refine the arguments of \cite{St19} to obtain the following identity.

\begin{theorem}\label{id.thm} For a harmonic map $u: N^3\to S^1$ with homogeneous Neumann condition, we have the identity
\begin{equation}\label{main.id}
\int_{S^1}\left(\int_{\Sigma_{\theta}}\frac{1}{2}(|du|^{-2}|Hess(u)|^2+R_N)+\int_{\partial\Sigma_{\theta}}H_{\partial N}\right)\leq 2\pi\int_{S^1}\chi(\Sigma_{\theta}),
\end{equation}
where $R_N$ is the scalar curvature of $N$, $H_{\partial N}=tr_{\partial N}(D\nu)$ is the mean curvature of $\partial N$, and $\chi(\Sigma_{\theta})$ is the Euler characteristic of the level set $\Sigma_{\theta}$.
\end{theorem}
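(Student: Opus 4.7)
The plan is to extend the pointwise level-set identity of \cite{St19} from the closed setting to $3$-manifolds with boundary, apply Gauss--Bonnet on each regular level set $\Sigma_\theta$, and then use the Neumann condition to reorganize the boundary contributions into the mean curvature integral $\int_{\partial\Sigma_\theta}H_{\partial N}$.

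Set $\varphi:=|du|$. At any regular value $\theta$ of $u$ (and of $u|_{\partial N}$), $\Sigma_\theta$ is a smooth surface with $\partial\Sigma_\theta\subset\partial N$ and $\varphi>0$ on $\Sigma_\theta$. Working in an orthonormal frame $\{e_1,e_2,\nu_\Sigma\}$ with $\nu_\Sigma:=\nabla u/\varphi$ normal to $\Sigma_\theta$, harmonicity of $u$ gives the pointwise decompositions $Hess(u)(e_i,e_j)=-\varphi A_\Sigma(e_i,e_j)$ for tangential $e_i,e_j$, $Hess(u)(e_i,\nu_\Sigma)=e_i(\varphi)$, and $Hess(u)(\nu_\Sigma,\nu_\Sigma)=\varphi H_\Sigma$. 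Substituting these into the Gauss equation $K_\Sigma=\tfrac12 R_N-Ric(\nu_\Sigma,\nu_\Sigma)+\det A_\Sigma$ and using Bochner $\tfrac12\Delta\varphi^2=|Hess(u)|^2+Ric(\nabla u,\nabla u)$ to eliminate $Ric(\nu_\Sigma,\nu_\Sigma)$ yields the clean identity
$$K_\Sigma=\frac{1}{2}\bigl(R_N+\varphi^{-2}|Hess(u)|^2\bigr)-\frac{\Delta\varphi}{\varphi}$$
on the regular set $\{\varphi>0\}$.

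Integrating over $\Sigma_\theta$, applying Gauss--Bonnet $\int_{\Sigma_\theta}K_\Sigma+\int_{\partial\Sigma_\theta}\kappa_g=2\pi\chi(\Sigma_\theta)$, then averaging in $\theta\in S^1$ and invoking the coarea formula ($\int_{S^1}\int_{\Sigma_\theta}f\,d\theta=\int_N f\varphi$, with an analog on $\partial N$) converts the level-set averages into integrals over $N$ and $\partial N$. The Neumann condition $\langle du,\nu_{\partial N}\rangle=0$ forces $\nabla u$ to be tangent to $\partial N$, so $\Sigma_\theta$ meets $\partial N$ orthogonally, $\nu_{\partial N}$ is the outward conormal of $\partial\Sigma_\theta$ inside $\Sigma_\theta$, and $\{T_{\partial\Sigma_\theta},\,\nu_\Sigma|_{\partial N}\}$ is an orthonormal frame of $T\partial N$ along $\partial\Sigma_\theta$. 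A short computation then gives $\partial_{\nu_{\partial N}}\varphi=\varphi\cdot II_{\partial N}(\nu_\Sigma,\nu_\Sigma)$ and $\kappa_g=-II_{\partial N}(T_{\partial\Sigma_\theta},T_{\partial\Sigma_\theta})$; combined with $tr_{\partial N}(II_{\partial N})=-H_{\partial N}$, this produces the pointwise cancellation $\kappa_g-II_{\partial N}(\nu_\Sigma,\nu_\Sigma)=H_{\partial N}$ on $\partial N$. All the boundary contributions then collapse into $\int_{\partial N}H_{\partial N}\,\varphi=\int_{S^1}\int_{\partial\Sigma_\theta}H_{\partial N}$, yielding the desired right-hand side.

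The main obstacle is the critical set $\{\varphi=0\}$, where $\varphi$ is only Lipschitz and both the pointwise identity and the interior divergence step above break down. Regarded as a Radon measure, $\Delta\varphi$ has a \emph{non-negative} singular part supported on the critical set, as one sees from a Kato-type bound or by regularizing with $\varphi_\epsilon:=(|du|^2+\epsilon^2)^{1/2}$ and passing to the limit. The distributional divergence theorem gives $\int_N\Delta\varphi=\int_{\partial N}\partial_{\nu_{\partial N}}\varphi$, while the coarea step captures only the smooth part, $\int_{S^1}\int_{\Sigma_\theta}\varphi^{-1}\Delta\varphi=\int_{\{\varphi>0\}}\Delta\varphi\leq\int_N\Delta\varphi$. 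Dropping this non-negative singular contribution converts the equality one would otherwise obtain into the stated inequality \eqref{main.id}. Making this limiting argument rigorous---verifying the sign of the singular mass and justifying the convergence of the approximating identities uniformly in $\epsilon$---is the technical heart of the proof; the remainder is the algebraic manipulation outlined above together with Sard's theorem applied to $u$ and $u|_{\partial N}$.
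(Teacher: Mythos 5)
Your pointwise computations match the paper's in substance: the identity $K_\Sigma = \tfrac12\left(R_N + \varphi^{-2}|\mathrm{Hess}(u)|^2\right) - \Delta\varphi/\varphi$ away from critical points is exactly the $\delta=0$ form of the inequality the paper obtains via the Schoen--Yau rearrangement of Bochner plus Gauss, and your reorganization of boundary terms (using orthogonality of $\Sigma_\theta$ and $\partial N$ to write $\partial_\nu\varphi$ and $\kappa_g$ in terms of $II_{\partial N}$, with the observation that $\{T_{\partial\Sigma_\theta},\nu_\Sigma\}$ frames $T\partial N$) is the same cancellation the paper performs when it writes $H_{\partial N}=\langle\eta,D_\eta\nu\rangle+\langle\tau,D_\tau\nu\rangle$ and $\kappa_{\partial\Sigma}=\langle\tau,D_\tau\nu\rangle$. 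The genuine divergence is in how the critical set of $u$ is handled. The paper never actually passes to the limit inside a global integration by parts: it keeps $\varphi_\delta=(|du|^2+\delta)^{1/2}$ throughout, uses the divergence theorem \emph{with} $\varphi_\delta$ (which is smooth globally), then splits $S^1$ into a small open set $A$ of critical values and its complement $B$, obtaining $\int_B Q_\delta \leq C\int_A\left(\mathcal H^2(\Sigma_\theta)+\mathcal H^1(\partial\Sigma_\theta)\right)$ from the uniform bounds $\Delta\varphi_\delta\geq -C|du|$ and $\langle d\varphi_\delta,\nu\rangle\leq C|du|$; sending $\delta\to 0$ at fixed $B$ and then shrinking $A$ by Sard finishes the argument. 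Your route instead passes directly to $\varphi=|du|$ and asserts that the singular part of the distributional measure $\Delta\varphi$ on $\{\varphi=0\}$ is nonnegative, so that $\int_{\{\varphi>0\}}\Delta\varphi\leq\int_{\partial N}\partial_\nu\varphi$. That claim is plausible (and you correctly flag it as the technical crux), but it requires a refined analysis of $|du|$ near its zero set that the paper entirely sidesteps: the $\delta$-regularization plus localization in the target circle gets the same conclusion using only the integrability of $\theta\mapsto\mathcal H^2(\Sigma_\theta)+\mathcal H^1(\partial\Sigma_\theta)$ and Sard's theorem, with no input on the fine structure of the critical set. So your proposal is essentially correct in its geometry, but the paper's treatment of the degenerate set is cleaner and more self-contained; if you pursue your version you would need to actually prove the sign of the singular mass (e.g.\ via a quantitative Kato inequality and the codimension estimate for the zero set of a harmonic one-form), which is precisely the work the regularization trick was designed to avoid.
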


\begin{remark} As in \cite{St19}, we remark that each regular level set $\Sigma_{\theta}$ may of course have multiple components, and $\chi(\Sigma_{\theta})$ denotes the sum of their Euler characteristics. 
\end{remark}

Recall that Poincar\'{e}-Lefschetz duality provides an isomorphism 
$$[N:S^1]\cong H^1(N;\mathbb{Z})\cong H_2(N, \partial N;\mathbb{Z}),$$
which may be realized geometrically by associating to any homotopy class $[u]\in [N:S^1]$ the relative homology class of a regular level set $\Sigma_{\theta}=u^{-1}\{\theta\}$. In particular, for any class $\alpha\in H_2(N,\partial N;\mathbb{Z})$, note that the level sets of the energy-minimizing map $u: N\to S^1$ in the dual homotopy class provide a family of distinguished representatives of $\alpha$. As in the closed case, we invite the reader to compare these representatives--arising from the linear pde \eqref{harm}-\eqref{neu}--to the stable free boundary minimal surfaces representing $\alpha$ arising from geometric measure theory.

As our first application of Theorem \ref{id.thm}, we extend to manifolds with boundary some geometric estimates for the Thurston norm on $H^1(N;\mathbb{Z}),$ originally established by Kronheimer and Mrowka for closed irreducible $3$-manifolds \cite{KM}, and extended in \cite{St19} to closed $3$-manifolds with no nonseparating spheres. Recall that the \emph{Thurston norm} of a relative homology class $\alpha\in H_2(N,\partial N;\mathbb{Z})$ was defined in \cite{Th} as the minimum
\begin{equation}\label{th.def}
\|\alpha\|_{Th}:=\min\{\chi_-(\Sigma)\mid [\Sigma]=\alpha\in H_2(N,\partial N)\},
\end{equation}
over embedded surfaces $(\Sigma,\partial \Sigma)\subset (N,\partial N)$ representing $\alpha$, of the quantity
$$\chi_-(\Sigma):=\Sigma_{i=1}^k\max\{0,-\chi(S_i)\}$$
giving the sum of $|\chi(S_i)|$ over the connected components $S_i$ of $\Sigma$ with negative Euler characteristic. For example, if $N\cong S^3\setminus \nu K$ is the manifold with boundary obtained by removing a tubular neighborhood $\nu K$ of a nontrivial knot $K\subset S^3$, then the Thurston norm of a generator $\alpha\in H_2(N,\partial N;\mathbb{Z})\cong \mathbb{Z}$ is given by $\|\alpha\|_{Th}=2g(K)-1$, where $g(K)$ is the genus of the knot.

In view of the relation \eqref{main.id}, it is unsurprising that lower bounds on the Thurston norm of a class $\alpha\in H_2(N,\partial N;\mathbb{Z})$ will force $N$ to have a certain amount of negative scalar curvature or negative mean curvature on the boundary. More precisely, we obtain from Theorem \ref{id.thm} the following estimate, whose proof we discuss in detail in Section \ref{bdry.thurst.sec}.

\begin{theorem}\label{bdry.thurst} Let $(N,g)$ be a compact, oriented $3$-manifold with boundary $\partial N$, such that every connected, embedded surface $(\Sigma,\partial \Sigma)\subset (N,\partial N)$ of genus zero is trivial in $H_2(N,\partial N;\mathbb{Z})$. For any nonzero class $\alpha\in H_2(N,\partial N;\mathbb{Z})$, the energy-minimizing representative $u:N\to S^1$ of the homotopy class dual to $\alpha$ satisfies
\begin{equation}
4\pi\|\alpha\|_{Th}\leq \|du\|_{L^2(N)}\|R_N^-\|_{L^2(N)}+2\|du\|_{L^2(\partial N)}\|H_{\partial N}^-\|_{L^2(\partial N)},
\end{equation}
with equality only if $N$ is covered isometrically by the cylinder $\Sigma \times \mathbb{R}$ over a surface $\Sigma$ of constant nonpositive curvature with (possibly empty) boundary $\partial \Sigma$ of constant nonpositive geodesic curvature.
\end{theorem}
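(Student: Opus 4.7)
The strategy is to derive the estimate from Theorem \ref{id.thm} via the coarea formula, Cauchy--Schwarz, and a topological argument ruling out disk and sphere components in the regular level sets of $u$.

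First, I would discard the nonnegative Hessian term in \eqref{main.id} and use the pointwise bounds $R_N \geq -R_N^-$ and $H_{\partial N} \geq -H_{\partial N}^-$ to obtain
\begin{equation*}
-4\pi\int_{S^1}\chi(\Sigma_\theta)\,d\theta \;\leq\; \int_{S^1}\int_{\Sigma_\theta} R_N^- + 2\int_{S^1}\int_{\partial\Sigma_\theta} H_{\partial N}^-.
\end{equation*}
The coarea formula (combined with the Neumann condition \eqref{neu}, under which $|du|$ on $\partial N$ coincides with the norm of the induced boundary differential) converts the right-hand side into $\int_N R_N^-|du| + 2\int_{\partial N} H_{\partial N}^-|du|$, and two applications of Cauchy--Schwarz yield the right-hand side of the target inequality.

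It then remains to establish the topological bound $-\chi(\Sigma_\theta) \geq \|\alpha\|_{Th}$ for a.e.\ regular value $\theta$. Since $[\Sigma_\theta] = \alpha$, the inequality $\chi_-(\Sigma_\theta) \geq \|\alpha\|_{Th}$ is automatic from the definition of the Thurston norm, so the content is to upgrade $\chi_-$ to $-\chi$, which amounts to showing $\Sigma_\theta$ has no components of positive Euler characteristic---that is, no disks or spheres, since $N$ is oriented. I would rule these out using the maximum principle: if $D \subset \Sigma_\theta$ were a disk or sphere component, the genus zero hypothesis would render $D$ trivial in $H_2(N,\partial N;\mathbb{Z})$, forcing $D$ to separate $N$; on one side $N_1$ of $D$ the map $u$ would lift (at least near $D$, and then on an extending subdomain by continuity) to a harmonic function with Dirichlet value $\theta$ on $D$ and homogeneous Neumann data on $\partial N \cap N_1$, so the strong maximum principle together with the Hopf boundary lemma would force this lift to equal $\theta$ identically, contradicting the nonconstancy of $u$ guaranteed by $\alpha \neq 0$. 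This topological step is the chief obstacle, since the lifting of $u$ on the possibly non-simply-connected subdomain $N_1$ needs care (one can for instance work on the cyclic cover dual to $u$, or lift on the component of $\{u \in (\theta - 1/2, \theta + 1/2)\}$ containing $D$); once established, integrating the per-level bound and combining with the previous display yields the desired inequality.

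For the equality case, saturation throughout forces $Hess(u) \equiv 0$, so $|du|$ is constant and the fibers of $u$ are totally geodesic; equality in both Cauchy--Schwarz inequalities, so $R_N^-$ and $H_{\partial N}^-$ are constant multiples of $|du|$, hence themselves constant; and $R_N \leq 0$, $H_{\partial N} \leq 0$ pointwise. Consequently $u$ is a Riemannian submersion onto $S^1$ with totally geodesic fibers, and passing to the cyclic cover $\tilde{N} \to N$ associated to $u_*\colon \pi_1(N) \to \mathbb{Z}$ produces an isometric splitting $\tilde{N} \cong \Sigma \times \mathbb{R}$. The curvature conditions then translate into $\Sigma$ having constant nonpositive Gauss curvature and $\partial \Sigma$ having constant nonpositive geodesic curvature, as claimed.
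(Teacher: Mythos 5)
Your outer strategy coincides with the paper's: discard the nonnegative Hessian term in \eqref{main.id}, apply the coarea formula (using the Neumann condition to identify $|du|$ with the boundary gradient), then Cauchy--Schwarz twice, and analyze the equality case via $Hess(u)\equiv 0$ and the resulting parallel splitting. Those steps are fine.

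The genuine gap is in the topological step, where you try to rule out genus-zero components of regular level sets by a maximum-principle argument. You acknowledge the lifting problem yourself, and it is not a cosmetic one: if a disk or sphere component $D\subset\Sigma_\theta$ separates $N$ into $N_1\cup_D N_2$, the restriction $u|_{N_1}$ lifts to a real-valued harmonic function only if $[u^*d\theta]|_{N_1}$ vanishes in $H^1(N_1;\mathbb{R})$, which is not automatic. Your two suggested workarounds do not obviously close the argument: on the $\mathbb{Z}$-cover the relevant domain may be noncompact with unbounded lift, and the connected component of $\{u\in(\theta-1/2,\theta+1/2)\}$ containing $D$ need not be a region with clean Dirichlet--Neumann boundary structure (its boundary generally includes pieces of the level sets $u^{-1}\{\theta\pm1/2\}$ in addition to $D$ and $\partial N$). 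Moreover, you would also need to handle the corners where the Dirichlet and Neumann portions of the boundary meet when invoking Hopf. As written the step does not go through.

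The paper's proof sidesteps all of this with a one-line observation you should use instead: for any connected component $\Gamma$ of a regular level set,
\[
\int_{\Gamma}*h=\int_{\Gamma}|h|>0,
\]
where $*h$ is closed on $N$ and, by the Neumann condition \eqref{neu}, satisfies $\iota^*_{\partial N}(*h)=0$; hence $*h$ defines a relative de Rham class and the positivity of the pairing shows $[\Gamma]\neq 0\in H_2(N,\partial N;\mathbb{Z})$. The hypothesis then forces $\Gamma$ to have nonpositive Euler characteristic directly, with no separation argument, no lifting, and no maximum principle. (Note the sign: you want to show the components are \emph{not} null-homologous, which is the opposite of what you set out to prove; under your hypothesis a genus-zero component \emph{would} be null-homologous, and that is precisely what the pairing argument contradicts.) The remainder of your equality analysis, modulo this fix, is consistent with the paper.
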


In the closed case, this reduces to Theorem 1.2 of \cite{St19}, stating that on a closed $3$-manifold $M$ with no nonseparating spheres, the Thurston norm satisfies the upper bound
\begin{equation}\label{og.est}
\|\alpha\|_{Th}\leq \frac{1}{4\pi}\|\alpha\|_H\|R_M^-\|_{L^2},
\end{equation}
where $\|\alpha\|_H$ denotes the \emph{harmonic norm}
\begin{equation}
\|\alpha\|_H:=\inf\{\|du\|_{L^2}\mid u\in C^1(N,S^1)\text{ dual to }\alpha\}.
\end{equation}
As an immediate consequence of Theorem \ref{bdry.thurst}, we see that \eqref{og.est} holds for certain three-manifolds with mean-convex boundary $H_{\partial N}\geq 0$.

\begin{corollary}\label{mean.cvx.cor} Let $(N,g)$ is a compact, oriented $3$-manifold with mean convex boundary $H_{\partial N}\geq 0$, such that every connected, embedded surface $(\Sigma,\partial\Sigma)\subset (N,\partial N)$ of genus zero is trivial in $H_2(N,\partial N;\mathbb{Z})$. Then for any nontrivial class $\alpha\in H_2(N,\partial N;\mathbb{Z})$, we have
\begin{equation}
4\pi \|\alpha\|_{Th}\leq \|\alpha\|_H\|R_N^-\|_{L^2(N)},
\end{equation}
with equality only if $N$ is covered isometrically by the cylinder $\Sigma \times \mathbb{R}$ over a surface of constant nonpositive curvature with (possibly empty) geodesic boundary.
\end{corollary}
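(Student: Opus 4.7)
The plan is to deduce the corollary directly from Theorem \ref{bdry.thurst} by exploiting the sign hypothesis on $H_{\partial N}$. Under the mean convexity assumption $H_{\partial N}\geq 0$, the negative part $H_{\partial N}^-:=\max\{0,-H_{\partial N}\}$ vanishes identically on $\partial N$, so $\|H_{\partial N}^-\|_{L^2(\partial N)}=0$ and the boundary term on the right-hand side of the inequality in Theorem \ref{bdry.thurst} drops out. Since $u:N\to S^1$ is the energy-minimizing representative of the homotopy class dual to $\alpha$, we have $\|du\|_{L^2(N)}=\|\alpha\|_H$ by definition of the harmonic norm, yielding
\begin{equation*}
4\pi\|\alpha\|_{Th}\leq \|du\|_{L^2(N)}\|R_N^-\|_{L^2(N)}=\|\alpha\|_H\|R_N^-\|_{L^2(N)},
\end{equation*}
which is the desired estimate.

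For the equality case, I would invoke the rigidity clause of Theorem \ref{bdry.thurst}: if equality holds in the inequality of the corollary, then in particular equality holds in Theorem \ref{bdry.thurst} (with the boundary term being zero on both sides), so $N$ is covered isometrically by a cylinder $\Sigma\times\mathbb{R}$ over a surface $\Sigma$ of constant nonpositive curvature whose (possibly empty) boundary $\partial\Sigma$ has constant nonpositive geodesic curvature. On such a cylinder, the mean curvature of the boundary $\partial\Sigma\times\mathbb{R}$ equals the geodesic curvature of $\partial\Sigma$ in $\Sigma$, and the mean convexity hypothesis $H_{\partial N}\geq 0$ lifts to the cover to force this geodesic curvature to be nonnegative. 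Combined with the nonpositivity already extracted from Theorem \ref{bdry.thurst}, this forces the geodesic curvature of $\partial\Sigma$ to vanish, i.e., $\partial\Sigma$ is geodesic in $\Sigma$.

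Since both assertions are essentially unpacking the consequences of the sign of $H_{\partial N}$ within the statement of Theorem \ref{bdry.thurst}, I do not anticipate any real obstacles here; the only point that requires a sentence of care is the translation between the mean curvature of $\partial N$ and the geodesic curvature of $\partial\Sigma$ under the cylindrical covering, but this is immediate from the product structure.
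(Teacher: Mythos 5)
Your proposal is correct and follows exactly the route the paper takes: since $H_{\partial N}\geq 0$ forces $\|H_{\partial N}^-\|_{L^2(\partial N)}=0$, the boundary term in Theorem \ref{bdry.thurst} vanishes and the inequality reduces to the desired form, while the rigidity case is obtained by combining the rigidity clause of Theorem \ref{bdry.thurst} (nonpositive geodesic curvature on $\partial\Sigma$) with mean convexity (nonnegative geodesic curvature) to conclude $\partial\Sigma$ is geodesic. The paper states this derivation in one sentence and calls it immediate; your write-up just makes explicit the sign bookkeeping the paper leaves implicit.
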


\begin{remark} One immediate consequence of the rigidity statement in Corollary \ref{mean.cvx.cor} is the nonexistence of metrics with $R_N>0$ and $H_{\partial N}\geq 0$ on compact, oriented $3$-manifolds $N$ with $H_2(N,\partial N;\mathbb{Z})\neq 0$ that contain no nonseparating spheres or disks (relative to $\partial N$). For instance, we immediately recover the fact that the complement $S^3\setminus \nu K$ of a nontrivial knot $K$ in $S^3$ admits no metric with positive scalar curvature and mean convex boundary. While we emphasize that these facts are strictly contained within known results about the topology of $3$-manifolds with positive scalar curvature and mean convex boundary (cf. Carlotto and Li's recent paper \cite{CarLi} and references therein), we believe that the straightforward proof via harmonic one-forms may be of some independent interest.
\end{remark}

Finally, in Section \ref{closed.sec}, we turn our attention back to the setting of closed $3$-manifolds, and extend the estimate \eqref{og.est} to any closed, oriented $3$--manifold. 

\begin{theorem}\label{thurst.thm.2} Let $(M^3,g)$ be a closed, oriented three-manifold. Then for any class $\alpha\in H_2(M;\mathbb{Z})$, we have
\begin{equation}
4\pi \|\alpha\|_{Th}\leq \|\alpha\|_H\|R_g^-\|_{L^2}.
\end{equation}
Moreover, if $\alpha$ cannot be represented by spheres, then equality implies that $M$ is covered isometrically by a cylinder $\Sigma \times \mathbb{R}$ over a closed surface $\Sigma$ of constant nonpositive curvature.
\end{theorem}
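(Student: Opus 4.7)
The plan is to reduce the estimate to Corollary \ref{mean.cvx.cor} by cutting $M$ along a suitable system of minimal $2$-spheres. By the minimal sphere decomposition results of Meeks and Yau \cite{MY}, one can find disjoint embedded minimal $2$-spheres $S_1,\dots,S_k\subset M$ such that the compact $3$-manifold $N$ obtained by cutting $M$ along $\bigcup_i S_i$ has every connected component $N_j$ irreducible. Minimality of the $S_i$ makes $\partial N$ itself minimal, and since $\partial N$ consists entirely of $2$-spheres, irreducibility of each $N_j$ implies the hypothesis of Corollary \ref{mean.cvx.cor}: an embedded disk in $N_j$ with boundary on some $S_i$ can be capped off by a subdisk of $S_i$ to yield an embedded sphere, which bounds a ball, so every genus-zero $(\Sigma,\partial\Sigma)\subset(N_j,\partial N_j)$ is null in $H_2(N_j,\partial N_j;\mathbb{Z})$.

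Next, let $u:M\to S^1$ be the energy-minimizing representative of the homotopy class dual to $\alpha$, so that $\|du\|_{L^2(M)}=\|\alpha\|_H$, and let $\tilde\alpha\in H_2(N,\partial N;\mathbb{Z})$ denote the class dual to $[u|_N]\in H^1(N;\mathbb{Z})$. Writing $u_N^{(j)}$ for the energy-minimizing representative of $\tilde\alpha|_{N_j}$ on $N_j$, Corollary \ref{mean.cvx.cor} yields
\begin{equation*}
4\pi\,\|\tilde\alpha|_{N_j}\|_{Th,N_j} \leq \|du_N^{(j)}\|_{L^2(N_j)}\,\|R_{N_j}^-\|_{L^2(N_j)}
\end{equation*}
for each $j$. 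Since $u|_{N_j}$ is an admissible competitor, $\|du_N^{(j)}\|_{L^2(N_j)}\leq \|d(u|_{N_j})\|_{L^2(N_j)}$; summing over $j$ and applying Cauchy-Schwarz---together with $\|du|_N\|_{L^2(N)}=\|du\|_{L^2(M)}$ and $\|R_N^-\|_{L^2(N)}=\|R_M^-\|_{L^2(M)}$ (the cut spheres having measure zero)---produces $4\pi\,\|\tilde\alpha\|_{Th,N}\leq\|\alpha\|_H\|R_M^-\|_{L^2(M)}$.

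The remaining topological step is to establish $\|\alpha\|_{Th,M}\leq\|\tilde\alpha\|_{Th,N}$. Given any surface representative $\tilde\Sigma\subset N$ of $\tilde\alpha$, a standard capping argument---attaching disjoint innermost disks in the $S_i$ to the boundary circles of $\tilde\Sigma$, iteratively---produces a closed surface $\Sigma\subset M$ with $\chi_-(\Sigma)\leq\chi_-(\tilde\Sigma)$, since capping only raises $\chi$ on each component. The class $[\Sigma]\in H_2(M;\mathbb{Z})$ agrees with $\alpha$ modulo sphere classes $\sum_i b_i[S_i]$, and adjoining or removing parallel copies of the $S_i$ yields a representative of $\alpha$ with the same $\chi_-$; taking the infimum over $\tilde\Sigma$ gives the comparison.

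For the rigidity, suppose equality holds and $\alpha$ is not representable by spheres. In the nondegenerate case $\|\alpha\|_{Th}\,\|R_M^-\|_{L^2}>0$, equality forces equality in Corollary \ref{mean.cvx.cor} on some component $N_j$ with $\|\tilde\alpha|_{N_j}\|_{Th,N_j}>0$, so that $N_j$ is covered isometrically by a cylinder $\Sigma_j\times\mathbb{R}$ over a constant nonpositively curved surface with geodesic (possibly empty) boundary. Since such a cylinder admits no $S^2$ boundary component, while $\partial N_j\subset\bigcup_i S_i$ is a union of $2$-spheres, we must have $\partial N_j=\emptyset$; connectedness of $M$ then forces $N_j=M$, and the rigidity follows. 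The step I expect to require the most care is the topological comparison $\|\alpha\|_{Th,M}\leq\|\tilde\alpha\|_{Th,N}$: one must verify that the capping/surgery of $\tilde\Sigma$ and the subsequent adjustment by the sphere classes $[S_i]$ truly preserve, rather than increase, the Thurston norm, which is what legitimizes reducing the problem on $M$ to the problem on the irreducible pieces $N_j$.
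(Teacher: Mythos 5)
Your overall strategy coincides with the paper's: cut $M$ along Meeks--Yau minimal spheres to obtain a manifold $N$ with minimal spherical boundary, apply Corollary \ref{mean.cvx.cor}, then transfer the estimate back to $M$ by comparing harmonic and Thurston norms across the cut, and use the spherical boundary to kill the cylinder-with-boundary case in the rigidity. However, there is a concrete error in how you verify the hypothesis of Corollary \ref{mean.cvx.cor}.

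The claim that the cut pieces $N_j$ are \emph{irreducible} is false: a compact $3$-manifold with a spherical boundary component is never irreducible unless it is $B^3$, since a push-off of a boundary sphere is an embedded $2$-sphere bounding no ball (it separates $N_j$ into a collar $S^2\times[0,1]$ and something that is not a ball). Already for $M=S^2\times S^1$, cutting along $S^2\times\{pt\}$ gives $S^2\times[0,1]$, which is not irreducible. Since your argument that every genus-zero relative cycle is null in $H_2(N_j,\partial N_j;\mathbb{Z})$ leans on this irreducibility (``which bounds a ball''), and moreover only addresses embedded disks rather than arbitrary connected genus-zero surfaces with several boundary circles or closed spheres, this step as written does not go through. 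The paper avoids this entirely: it cuts only along a finite collection of disjoint \emph{nonseparating} minimal spheres $S_1,\dots,S_k$ generating the spherical subgroup $\mathcal{S}\subset H_2(M;\mathbb{Z})$ (Lemma \ref{mksyau.lem}), proves a closing-up lemma (Lemma \ref{close.lem}) that replaces any connected $(\Sigma,\partial\Sigma)\subset(N,\partial N)$ by a homologous closed surface of the same genus, and then observes (Remark \ref{no.gen.0}) that any closed genus-zero surface in $N$ lies in $\mathcal{S}$, hence is homologous in $N$ to a combination of the boundary spheres and therefore vanishes in $H_2(N,\partial N;\mathbb{Z})$. No irreducibility of the pieces is needed. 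A side benefit of cutting only along nonseparating spheres is that $N$ stays connected, so Corollary \ref{mean.cvx.cor} is applied once rather than componentwise, sidestepping both your Cauchy--Schwarz summation and the need to worry about components on which $\tilde\alpha$ restricts to zero (where the rigidity clause of the corollary gives nothing). Your remaining steps---the nonincrease of the harmonic norm via $u\circ\Psi$, the capping/adjoining-spheres argument for $\|\alpha\|_{Th}\leq\|\Phi(\alpha)\|_{Th}$, and the endgame in the rigidity---match the paper's Lemma \ref{phi.harm} and Claim \ref{phi.thurst} in substance.
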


\begin{remark}
As in \cite{KM} and \cite{St19}, it is easy to find a family of metrics for which the inequality \eqref{og.est} approaches equality, so that
$$\|\alpha\|_{Th}=\frac{1}{4\pi}\inf\{\|\alpha\|_H\|R_g^-\|_{L^2}\mid g\in Met(M)\}$$
gives a geometric characterization of the Thurston norm on arbitrary closed $3$-manifolds. 
\end{remark}

The proof of Theorem \ref{thurst.thm.2} combines the result of Corollary \ref{mean.cvx.cor} with some deep results from minimal surface theory--namely, the geometric sphere theorem of Meeks and Yau \cite{MY}. Roughly, we apply the work of Meeks and Yau to find a disjoint collection $S_1,\ldots, S_k$ of embedded, nonseparating minimal spheres generating the portion of $H_2(M;\mathbb{Z})$ that can be represented entirely by embedded spheres, and apply Corollary \ref{mean.cvx.cor} to the manifold $N$ with $2k$ minimal boundary components obtained by cutting $M$ along these spheres. After arguing that the harmonic norm is nonincreasing under the natural map $\Phi: H_2(M;\mathbb{Z})\to H_2(N,\partial N;\mathbb{Z})$, and the Thurston norm is nondecreasing, we appeal to Corollary \ref{mean.cvx.cor} to arrive at the desired conclusion.

Among other consequences, Theorem \ref{thurst.thm.2} provides a new perspective on the fact--originally proved by Schoen and Yau in \cite{SY.psc.1}--that the homology $H_2(M;\mathbb{Z})$ of any closed $3$-manifold of positive scalar curvature is generated entirely by embedded spheres. It is interesting that, while our proof of Theorem \ref{thurst.thm.2} also relies on minimal surface theory, our use of minimal surfaces is somewhat orthogonal to that of Schoen and Yau, using minimal representatives of spherical homology classes to produce suitable boundary conditions for our harmonic form techniques.

In particular, our methods give an alternative technique for ruling out the existence of positive scalar curvature metrics on any manifold of the form $M\approx T^3 \# M_0$--a fact which, by an observation of Lohkamp \cite{Loh}, implies the Riemannian positive mass theorem in dimension three. In a separate paper \cite{BKKS} with Demetre Kazaras and Marcus Khuri, we apply variants of these techniques in the asymptotically flat setting to obtain a more direct proof of the positive mass theorem, providing a family of lower bounds for the ADM mass which are nonnegative on manifolds of nonnegative scalar curvature, and vanish precisely when certain harmonic coordinates are parallel.

\section*{acknowledgements}
The authors wish to thank Chao Li, Yevgeny Liokumovich, Demetre Kazaras, and Marcus Khuri for stimulating conversations related to this work. During the completion of this work, DS was supported in part by the NSERC grants of Robert Haslhofer, Robert Jerrard, and Yevgeny Liokumovich, whose support he gratefully acknowledges.

\section{Derivation of main identity}\label{main.comp}

In this section, we lay out carefully the computations leading to Theorem \ref{id.thm}. The argument is essentially the same as that of \cite{St19} Section 2, now with an examination of the boundary terms for $u$ satisfying the homogeneous Neumann condition \eqref{neu}. We remark that related computations can also be found in the paper \cite{BKKS} with Kazaras and Khuri.

\begin{proof}[Proof of Theorem \ref{id.thm}]

Let $u: N\to S^1$ be harmonic \eqref{harm} with homogeneous Neumann condition \eqref{neu} on $\partial N$, and denote by $\varphi_{\delta}$ the perturbation
$$\varphi_{\delta}:=(|h|^2+\delta)^{1/2}$$
of the norm $|h|=|du|$ of the gradient one-form $h=u^*(d\theta)$. As discussed in \cite{St19}, a simple application of the Schoen-Yau rearrangement trick to the Ricci term in the Bochner identity for $\Delta |h|^2$ yields the bound
\begin{equation}\label{lap.bd}
\Delta \varphi_{\delta}\geq\frac{|h|}{2\varphi_{\delta}}(|h|^{-1}|Dh|^2+R_N-R_{\Sigma})
\end{equation}
away from critical points of $u$, where $R_N$ denotes the scalar curvature of $N$ and $R_{\Sigma}$ denotes the intrinsic scalar curvature of the level set $\Sigma_{\theta}=u^{-1}\{\theta\}$ passing through a given point.

Moreover, away from critical points of $u$ along the boundary $\partial N$, we see that
\begin{eqnarray*}
\langle d\varphi_{\delta},\nu\rangle &=&\varphi_{\delta}^{-1}\langle \frac{1}{2}d|h|^2,\nu\rangle\\
&=&\varphi_{\delta}^{-1}\langle D_hh,\nu\rangle\\
&=&-\varphi_{\delta}^{-1}\langle h,D_h\nu\rangle,
\end{eqnarray*} 
where $\nu$ is the outward unit normal to $\partial N$, and in the last line we've used the Neumann condition \eqref{neu} to conclude that $\langle D_hh,\nu\rangle+\langle h,D_h\nu\rangle=0$.

Now, let $A\subset S^1$ be an open set containing the critical values of $u|_N$, and let $B\subset S^1$ be the complementary closed subset of regular values. Note that, by virtue of the Neumann condition \eqref{neu}, $A$ contains the critical values of the restriction $u|_{\partial N}$ as well.
For $\theta\in B$, denote by $Q_{\delta}(\theta)$ the quantity
$$Q_{\delta}(\theta):=\int_{\Sigma_{\theta}}\frac{|h|}{2\varphi_{\delta}}(|h|^{-2}|Dh|^2+R_N-R_{\Sigma})+\int_{\partial\Sigma_{\theta}}\varphi_{\delta}^{-1}\langle \frac{h}{|h|},D_h\nu\rangle.$$

Combining the coarea formula with the preceding computations, we then see that
\begin{eqnarray*}
\int_{\theta \in B}Q_{\delta}(\theta)&\leq &\int_{u^{-1}(B)}\Delta \varphi_{\delta}+\int_{u^{-1}(B)\cap \partial N}\varphi_{\delta}^{-1}\langle h,D_h\nu\rangle\\
&=&\int_{u^{-1}(B)}\Delta \varphi_{\delta}-\int_{u^{-1}(B)\cap \partial N}\langle d\varphi_{\delta},\nu\rangle\\
&=&-\int_{u^{-1}(A)}\Delta \varphi_{\delta}+\int_{u^{-1}(A)\cap \partial N}\langle d\varphi_{\delta},\nu\rangle,
\end{eqnarray*}
where we have used the Neumann condition \eqref{neu} to see that $|d(u|_{\partial N})|=|du|=|h|$ when applying the coarea formula on $\partial N$. In particular, since 
$$\Delta \varphi_{\delta}\geq |Ric_N| |h|\geq -C|h|$$
globally on $N$ by the standard form of the Bochner identity, and
$$\langle d\varphi_{\delta},\nu\rangle\leq C |h|,$$
applying the coarea formula once more in the preceding computation gives
\begin{equation}\label{q.bd}
\int_{\theta\in B} Q_{\delta}(\theta)\leq C\int_{\theta\in A}[\mathcal{H}^2(\Sigma_{\theta})+\mathcal{H}^1(\partial \Sigma_{\theta})].
\end{equation}

Now, setting $\eta=h/|h|$, as $\delta\to 0$, we see that $Q_{\delta}$ converges uniformly on the compact set $B$ of regular values to
$$Q(\theta)=\int_{\Sigma_{\theta}}\frac{1}{2}(|h|^{-2}|Dh|^2+R_N-R_{\Sigma})+\int_{\partial\Sigma_{\theta}}\langle \eta, D_{\eta}\nu\rangle.$$
Next, by the Neumann condition \eqref{neu}, note that the normal $\nu$ to $\partial N$ is also the outward unit normal to the curve $\partial\Sigma_{\theta}$ as the boundary of $\Sigma_{\theta}$, so that the cross product
$$\tau:=*(\eta\wedge \nu),$$
gives a unit tangent to $\partial \Sigma_{\theta}$, and the geodesic curvature $\kappa_{\partial\Sigma_{\theta}}$ is given by
$$\kappa_{\partial \Sigma}=\langle \tau,D_{\tau}\nu\rangle.$$
On the other hand, since $\eta,\tau$ is an orthonormal basis for $\partial N$, we see that
$$H_{\partial N}=\langle \eta,D_{\eta}\nu\rangle+\langle \tau,D_{\tau}\nu\rangle,$$
and putting all this together, we find that
\begin{eqnarray*}
Q(\theta)&=&\int_{\Sigma_{\theta}}\frac{1}{2}(|h|^{-2}|Dh|^2+R_N-R_{\Sigma})+\int_{\partial \Sigma_{\theta}}(H_{\partial N}-\kappa_{\partial\Sigma})\\
&=&\int_{\Sigma_{\theta}}\frac{1}{2}(|h|^{-2}|Dh|^2+R_N)+\int_{\partial\Sigma_{\theta}}H_{\partial N}-2\pi \chi(\Sigma_{\theta}),
\end{eqnarray*}
where in the last line we have applied the Gauss-Bonnet formula for $\Sigma_{\theta}$. Passing to the limit $\delta \to 0$ in \eqref{q.bd}, we see now that
\begin{eqnarray*}
\int_{\theta\in B}\left(\int_{\Sigma_{\theta}}\frac{1}{2}(|h|^{-2}|Dh|^2+R_N)+\int_{\partial \Sigma_{\theta}}H_{\partial N}\right)&\leq &\int_{\theta\in B}2\pi \chi(\Sigma_{\theta})\\
&&+C\int_{\theta\in A}(\mathcal{H}^2(\Sigma_{\theta})+\mathcal{H}^1(\partial\Sigma_{\theta})).
\end{eqnarray*}
Finally, by Sard's theorem, we can take the measure of $A$ arbitrarily small, and since $\theta\mapsto \mathcal{H}^2(\Sigma_{\theta})+\mathcal{H}^1(\partial \Sigma_{\theta})$ is an integrable function on $S^1$ (by the coarea formulas for $u|_N$ and $u|_{\partial N}$), it follows that
\begin{equation}\label{main.id}
\int_{\theta\in S^1}\left(\int_{\Sigma_{\theta}}\frac{1}{2}(|h|^{-2}|Dh|^2+R_N)+\int_{\partial\Sigma_{\theta}}H_{\partial N}\right)\leq \int_{\theta\in S^1}2\pi \chi(\Sigma_{\theta}).
\end{equation}

\end{proof}

\section{Scalar curvature, mean curvature, and the Thurston norm for manifolds with boundary}\label{bdry.thurst.sec}

With the identity \eqref{main.id} in place, we move on now to the proof of Theorem \ref{bdry.thurst}. The proof is fairly straightforward, following much the same lines as that of Theorem 1.2 in \cite{St19}.

\begin{proof}[Proof of Theorem \ref{bdry.thurst}] Fix a nontrivial relative homology class $\alpha\in H_2(N,\partial N;\mathbb{Z})$, and let $u: N\to S^1$ be an energy-minimizing representative of the dual homotopy class, so that the gradient one-form $h=u^*(d\theta)$ satisfies \eqref{harm} and \eqref{neu}. For such a map $u$, we observe that every connected component $\Gamma$ of a regular level set $\Sigma_{\theta}$ defines a nontrivial class in $H_2(N,\partial N)$: this follows from the simple observation that $\Gamma$ has positive pairing
$$\int_{\Gamma} *h=\int_{\Gamma}|h|>0,$$
with the $2$-form $*h$ Hodge dual to $h$--which, by virtue of \eqref{harm}-\eqref{neu}, satisfies
$$d(*h)\equiv 0\text{ on }N\text{ and }\iota^*_{\partial N}(*h)=0\text{ on }\partial N.$$
In particular, it follows from the hypotheses of the theorem that every connected component of a regular level set $\Sigma_{\theta}$ has nonpositive Euler characteristic, so that
\begin{equation}\label{th.bd}
\|\alpha\|_{Th}\leq \chi_-(\Sigma_{\theta})=-\chi(\Sigma_{\theta}),
\end{equation}
by definition \eqref{th.def} of the Thurston norm.

As a consequence, integrating \eqref{th.bd} over $S^1$ and appealing to Theorem \ref{id.thm}, we see that
\begin{eqnarray*}
2\pi \|\alpha\|_{Th}&\leq &-\int_{\theta\in S^1}2\pi \chi(\Sigma_{\theta})\\
&\leq & -\int_{\theta\in S^1}\left(\int_{\Sigma_{\theta}}\frac{1}{2}(|du|^{-2}|Hess(u)|^2+R_N)+\int_{\partial\Sigma_{\theta}}H_{\partial N}\right)\\
&\leq &-\frac{1}{2}\int_N |du|R_N-\int_{\partial N}|du| H_{\partial N},
\end{eqnarray*}
with equality only if $Hess(u)\equiv 0$. Applying the Cauchy-Schwarz inequality, it follows immediately that
$$2\pi \|\alpha\|_{Th}\leq \frac{1}{2}\|du\|_{L^2(N)}\|R_N^-\|_{L^2}+\|du\|_{L^2(\partial N)}\|H_{\partial N}^-\|_{L^2(\partial N)},$$
giving us the desired estimate, with equality only if $Hess(u)\equiv 0$, and $R_N\equiv -c_1|du|$, $H_{\partial N}\equiv -c_2|du|$ are nonpositive constants. 

Finally, if equality holds, then since $Hess(u)\equiv 0$ and $u$ satisfies the Homogeneous Neumann condition \eqref{neu}, then fixing any component $S$ of a regular level set $\Sigma=u^{-1}\{\theta\}$, we see that the gradient flow 
$$\Phi: S\times \mathbb{R}\to N,\text{ }\frac{\partial \Phi}{\partial t}=\frac{grad(u)}{|grad(u)|}\circ \Phi$$
defines a local isometry. That $\Sigma$ has constant curvature and $\partial \Sigma$ has constant geodesic curvature than follows from the constancy of $R_N$ and $H_{\partial N}$, respectively.
\end{proof}

The path from Theorem \ref{bdry.thurst} to Corollary \ref{mean.cvx.cor} for manifolds with mean convex boundary is immediate. In the case of equality, the rigidity statement in Theorem \ref{bdry.thurst} together with the mean convexity $H_{\partial N}\geq 0$ evidently implies that the boundary $\partial N$ is a union of totally geodesic tori.

\section{Scalar curvature and Thurston norm for general closed $3$-manifolds}\label{closed.sec}

In this section, we employ Corollary \ref{mean.cvx.cor} together with a cutting procedure to prove Theorem \ref{thurst.thm.2}, thereby generalizing Theorem 2 of \cite{KM} and Theorem 1.2 of \cite{St19} to arbitrary closed, oriented $3$-manifolds. Recall that the case left open by Theorem 1.2 of \cite{St19} is that of $3$-manifolds containing nonseparating spheres--that is, manifolds whose prime decomposition contains an $S^2\times S^1$ factor.

To this end, fix an arbitrary closed, oriented $3$-manifold $(M^3,g)$, and consider the subgroup $\mathcal{S}\subset H_2(M;\mathbb{Z})$ generated by embedded $2$-spheres in $M$; that is, let $\mathcal{S}$ be the maximal subset of $H_2(M;\mathbb{Z})$ consisting of homology classes representable by unions of spheres. The key first step in our argument consists of finding a collection of disjoint collection of minimal $2$-spheres generating $\mathcal{S}$, by appealing to the work of Meeks and Yau \cite{MY}.

\begin{lemma}\label{mksyau.lem} There exists a finite collection of disjoint embedded, nonseparating, minimal two-spheres
$$S_1,\ldots,S_k\subset M$$
generating $\mathcal{S}$.
\end{lemma}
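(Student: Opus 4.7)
The plan is to combine the classical prime decomposition of 3-manifolds with the Meeks-Yau existence theorem for embedded minimal 2-spheres. Since $M$ is compact, $\mathcal{S}\subset H_2(M;\mathbb{Z})$ has finite rank, say $k$. By the Kneser-Milnor prime decomposition theorem, $M$ can be cut along a disjoint collection of embedded 2-spheres into prime pieces, and the nonseparating spheres in this collection -- there are exactly $k$ of them, one coming from each $S^2\times S^1$ factor -- give pairwise disjoint embedded spheres $\Sigma_1,\ldots,\Sigma_k\subset M$ whose classes span $\mathcal{S}$. Each $\Sigma_j$ is moreover nontrivial in $\pi_2(M)$, since a null-homotopic embedded 2-sphere in an orientable 3-manifold bounds an embedded 3-ball and is therefore separating.

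Next I would replace the $\Sigma_j$ inductively by embedded minimal spheres $S_j$. Suppose $S_1,\ldots,S_{j-1}$ have been produced, pairwise disjoint, embedded, and minimal, with $[S_i]=[\Sigma_i]$ in $H_2(M;\mathbb{Z})$. Consider the class $\mathcal{I}_j$ of embeddings $S^2\hookrightarrow M\setminus\bigcup_{i<j}S_i$ isotopic to $\Sigma_j$ in $M$; after a small isotopy $\Sigma_j$ itself lies in $\mathcal{I}_j$, so the class is nonempty and still nontrivial in $\pi_2$ of the ambient punctured manifold. Applying the Meeks-Yau theorem \cite{MY} -- the existence of an embedded minimal representative, area-minimizing in a prescribed isotopy class of spheres that is nontrivial in $\pi_2$ -- to the class $\mathcal{I}_j$ yields the next embedded minimal sphere $S_j$, automatically disjoint from $S_1,\ldots,S_{j-1}$ and remaining minimal when viewed as a submanifold of $M$.

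The main obstacle is controlling the minimization procedure: area-minimizing sequences of embedded 2-spheres can in principle degenerate by collapsing to a point or pinching off trivial bubbles, and in the constrained setting the limit might a priori be pushed into contact with the previously-constructed $S_i$. The substantive input of the Meeks-Yau geometric sphere theorem is precisely the guarantee, for an isotopy class nontrivial in $\pi_2$, that the infimum is realized by an honest embedded minimal 2-sphere rather than by a degenerate limit. The strong maximum principle for minimal surfaces then rules out tangential contact of the minimizer with any $S_i$: such contact would force $S_j = S_i$ as connected minimal spheres, contradicting $[S_j]=[\Sigma_j]\neq[\Sigma_i]=[S_i]$. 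After $k$ iterations the construction delivers the desired disjoint collection $S_1,\ldots,S_k\subset M$ of pairwise disjoint, embedded, nonseparating, minimal two-spheres generating $\mathcal{S}$.
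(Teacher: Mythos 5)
Your approach is genuinely different from the paper's, and it contains a real gap. The paper does not invoke the prime decomposition at all: it appeals directly to Theorems~5 and~7 of Meeks--Yau \cite{MY} (see also Hass--Scott \cite{HS}), which already produce a finite collection of disjoint conformal minimal immersions $f_1,\dots,f_m : S^2 \to M$ generating $\pi_2(M)$ as a $\pi_1(M)$-module, each one either an embedding or a double cover of an embedded $\mathbb{RP}^2$. From that collection the paper simply discards those $f_i$ with $(f_i)_*[S^2]=0\in H_2(M;\mathbb{Z})$, observes that the remaining ones must be embedded two-sided spheres (since an $\mathbb{RP}^2$-factoring map has trivial integral pushforward), and that they generate $\mathcal{S}$ because the full list generates $\pi_2(M)$. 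Disjointness and minimality come for free from the Meeks--Yau statement.

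The gap in your argument is the step ``after a small isotopy $\Sigma_j$ itself lies in $\mathcal{I}_j$.'' You have constructed disjoint minimal spheres $S_1,\dots,S_{j-1}$ by minimization, but $\Sigma_j$ is a fixed topological sphere coming from the sphere system, and there is no reason it should be disjoint or isotopable off of $S_1,\dots,S_{j-1}$; those minimal spheres were not chosen with any reference to $\Sigma_j$. To remove the intersections $\Sigma_j \cap S_i$ you would have to perform surgery along innermost circles, which splits $\Sigma_j$ into several spheres, potentially changes its isotopy class, and destroys the clean inductive bookkeeping $[S_j]=[\Sigma_j]$ that your maximum-principle argument at the end relies on. Separately, the result you are invoking --- existence of an embedded area minimizer in a prescribed \emph{isotopy class} of spheres --- is really Meeks--Simon--Yau rather than the Meeks--Yau paper cited in this article; Meeks--Yau minimize among all embedded spheres not bounding a ball, or over a $\pi_1$-injective homotopy class, not over an isotopy class. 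In short, the iterative, one-sphere-at-a-time minimization with barriers is precisely the hard part that the Meeks--Yau disjointness theorem is designed to handle all at once, and your outline reproduces the statement of that theorem without supplying its content. The paper's route --- cite Meeks--Yau Theorem~7 and filter by integral homology --- is both shorter and avoids the isotopy/disjointness issue entirely.
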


\begin{proof} By Theorems 5 and 7 of \cite{MY} (see also Section 4 of \cite{HS}), we know that there exists a finite collection 
$$f_1,\ldots,f_m: S^2\to M$$
of conformal minimal immersions which generate $\pi_2(M)$ as a $\pi_1(M)$-module. Moreover, each map $f_i$ is either an embedding of $S^2$ or factors through an embedding of $\mathbb{RP}^2$, and the images $f_i(M)$ are disjoint. 

Of these maps $f_i$, consider the subcollection $f_1,\ldots,f_k$ for which 
\begin{equation}\label{hom.cond}
(f_i)_*[S^2]\neq 0\in H_2(M;\mathbb{Z}).
\end{equation}
Since the full collection $f_1,\ldots,f_m$ generates $\pi_2(M)$, it's clear that the pushforwards $S_1=(f_1)_*(S^2),\ldots, S_k=(f_k)_*(S^2)$ must generate $\mathcal{S}$ in $H_2(M;\mathbb{Z})$. Moreover, the nontriviality of the pushforward $S_i=(f_i)_*[S^2]$ in integral homology implies that $f_i$ \emph{does not} factor through a map $\mathbb{RP}^2\to M$, so each $S_i$ must correspond to an embedded, two-sided $2$-sphere in $M$. Finally, since we know already that the maps $f_i$ have disjoint images, it follows that $S_1,\ldots,S_k$ satisfy the desired properties.
\end{proof}

Now, consider the compact manifold with boundary $N$ obtained by cutting $M$ along the spheres $S_1,\ldots,S_k$ of Lemma \ref{mksyau.lem}. Note that the boundary $\partial N$ of $N$ consists of $2k$ minimal spheres, and that $N$ carries an obvious local isometry
$$\Psi: N\to M$$
which restricts to a global isometry $N\setminus \partial N\to M\setminus \bigcup_{j=1}^kS_k$. Our proof of Theorem \ref{thurst.thm.2} rests on an analysis of the map
$$\Phi=\Psi^*: H^1(M;\mathbb{Z})\to H^1(N;\mathbb{Z})$$
given by pullback under $\Psi$; equivalently, we may view $\Phi$ via Poincar\'{e}-Lefschetz duality as a map
$$\Phi: H_2(M;\mathbb{Z})\to H_2(N,\partial N;\mathbb{Z}).$$
Since $\Psi:N\to M$ restricts to a global isometry on the interior of $N$, it is easy to see that $\Phi$ \emph{decreases} the harmonic norm.

\begin{lemma}\label{phi.harm} For every $\alpha\in H^1(M;\mathbb{Z})$, we have
\begin{equation}
\|\Phi(\alpha)\|_H\leq \|\alpha\|_H.
\end{equation}
\end{lemma}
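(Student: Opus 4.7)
The plan is to produce an explicit competitor for the infimum defining $\|\Phi(\alpha)\|_H$ by pulling back a near-optimal representative of $\alpha$ under $\Psi$, and then use the fact that $\Psi$ is a local isometry (and a genuine isometry off a set of measure zero) to show that the pullback has the same Dirichlet energy on $N$ as the original map has on $M$.

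First I would fix an arbitrary $C^1$ representative $u : M \to S^1$ of $\alpha$ (i.e.\ a map with $[u^*(d\theta)] = \alpha$ under the standard identification $H^1(M;\mathbb{Z}) \cong [M : S^1]$). Set $\tilde u := u \circ \Psi : N \to S^1$. Since $\Psi : N \to M$ is smooth (the natural map from the cut manifold), $\tilde u$ is $C^1$, and by the definition of $\Phi = \Psi^*$, the map $\tilde u$ represents $\Phi(\alpha) \in H^1(N;\mathbb{Z})$. Therefore $\tilde u$ is a valid competitor in the infimum
\[
\|\Phi(\alpha)\|_H = \inf\{\|dv\|_{L^2(N)} \mid v \in C^1(N, S^1) \text{ dual to } \Phi(\alpha)\},
\]
so $\|\Phi(\alpha)\|_H \leq \|d\tilde u\|_{L^2(N)}$.

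Next I would compute $\|d\tilde u\|_{L^2(N)}$. Because $\Psi$ is a local isometry, the chain rule gives $|d\tilde u|(p) = |du|(\Psi(p))$ pointwise on $N$. The restriction $\Psi|_{N \setminus \partial N} : N \setminus \partial N \to M \setminus \bigcup_{j=1}^k S_j$ is a global isometry, in particular a measure-preserving diffeomorphism, and both $\partial N \subset N$ and $\bigcup_j S_j \subset M$ have three-dimensional measure zero. Consequently, by a change of variables,
\[
\|d\tilde u\|_{L^2(N)}^2 = \int_{N \setminus \partial N} |du \circ \Psi|^2 \, dV_N = \int_{M \setminus \bigcup S_j} |du|^2 \, dV_M = \|du\|_{L^2(M)}^2.
\]
Combining with the previous inequality yields $\|\Phi(\alpha)\|_H \leq \|du\|_{L^2(M)}$, and taking the infimum over all $C^1$ representatives $u$ of $\alpha$ gives $\|\Phi(\alpha)\|_H \leq \|\alpha\|_H$.

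This argument is essentially a direct unwinding of definitions, with no genuine obstacle. The only point that warrants care is the justification that $\tilde u = u \circ \Psi$ is admissible in the defining infimum for $\Phi(\alpha)$ — i.e.\ that it genuinely represents the pulled-back cohomology class — and this is immediate from functoriality of the identification $[N : S^1] \cong H^1(N;\mathbb{Z})$ under the smooth map $\Psi$. Everything else is the observation that a local isometry which is bijective away from a null set preserves Dirichlet energy.
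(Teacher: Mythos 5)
Your proof is correct and follows essentially the same approach as the paper: pull back a representative via $\Psi$, note that $\Psi$ being a local isometry (and a bijection off a null set) preserves the Dirichlet energy, and conclude. The only cosmetic difference is that you take an infimum over all $C^1$ representatives at the end, whereas the paper starts directly from the harmonic representative $u$ with $\|\alpha\|_H = \|du\|_{L^2(M)}$; the two are equivalent.
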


\begin{proof} Given $\alpha\in H^1(M;\mathbb{Z})\cong [M:S^1]$, let $u: M\to S^1$ be the harmonic representative, so that
$$\|\alpha\|_H=\|du\|_{L^2(M)}.$$
The harmonic norm $\|\Phi(\alpha)\|_H$ of the class $\Phi(\alpha)\in H^1(N;\mathbb{Z})$ is then given by the infimum of the $L^2$ norm of the gradient among maps homotopic to $u\circ \Psi$ in $N$. In particular, it follows that
$$\|\Phi(\alpha)\|_H\leq \|d(u\circ \Psi)\|_{L^2(N)}=\|du\|_{L^2(M)}=\|\alpha\|_H,$$
as claimed.
\end{proof}

Next, we argue that the map $\Phi: H_2(M;\mathbb{Z})\to H_2(N,\partial N;\mathbb{Z})$ \emph{does not decrease} the Thurston norm of a class $\alpha\in H_2(M;\mathbb{Z})$. In the course of the proof, we make repeated use of the following simple topological lemma, which allows us to remove boundary components from the representatives of classes in $H_2(N,\partial N;\mathbb{Z})$.

\begin{lemma}\label{close.lem} For any connected, embedded surface $(\Sigma,\partial \Sigma)\subset (N,\partial N)$, there exists a closed surface $\tilde{\Sigma}\subset N$ of the same genus such that $[\Sigma]=[\tilde{\Sigma}]\in H_2(N,\partial N;\mathbb{Z})$.
\end{lemma}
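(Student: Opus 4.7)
The plan is to cap off the boundary circles of $\Sigma$ one at a time using innermost disks on the sphere components of $\partial N$, while preserving connectedness, genus, and the relative homology class at each step.

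I proceed by induction on the number of components of $\partial \Sigma$, with the empty case trivial. For the inductive step, assume $\partial \Sigma \neq \emptyset$ and (after a small isotopy) that $\Sigma$ meets $\partial N$ transversally. Pick a sphere component $T \cong S^2$ of $\partial N$ that meets $\partial \Sigma$. The finite collection $\partial \Sigma \cap T$ of disjoint embedded circles cuts $T$ into planar regions organized as a tree under adjacency, and any leaf of this tree is a disk $R \subset T$ bounded by a single \emph{innermost} circle $\gamma \subset \partial \Sigma$ with $\mathrm{int}(R) \cap \partial \Sigma = \emptyset$.

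Next, fix a collar neighborhood $\partial N \times [0, \epsilon)$ of $\partial N$ small enough that $\Sigma \cap (\partial N \times [0, \epsilon)) = \partial \Sigma \times [0, \epsilon)$. For some $0 < t < \epsilon$, I form the new embedded surface
\begin{equation*}
\Sigma' := \bigl(\Sigma \setminus (\gamma \times [0, t))\bigr) \cup \bigl(R \times \{t\}\bigr).
\end{equation*}
Because $\mathrm{int}(R)$ is disjoint from $\partial \Sigma$, the capping disk $R \times \{t\}$ meets the rest of $\Sigma'$ only along $\gamma \times \{t\}$, so $\Sigma'$ is genuinely embedded and can be smoothed to a $C^\infty$ surface. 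Topologically this is precisely the ``cap off $\gamma$ with a disk'' operation, so $\Sigma'$ is connected, has the same genus as $\Sigma$, and has one fewer boundary circle than $\Sigma$. Furthermore $[\Sigma'] = [\Sigma]$ in $H_2(N, \partial N; \mathbb{Z})$: up to orientation, the chain-level difference $\Sigma' - \Sigma = R \times \{t\} - \gamma \times [0, t]$ is the boundary, modulo the chain $R \times \{0\} = R \subset \partial N$, of the $3$-chain $R \times [0, t] \subset N$.

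Iterating, after finitely many such modifications every boundary circle has been capped off, yielding a closed embedded surface $\tilde{\Sigma} \subset N$ of the same genus as $\Sigma$ representing the same relative homology class. The only mildly subtle point is the existence of the innermost disk, which is a standard piece of planar topology on the sphere components of $\partial N$; once that is in place, the rest is the essentially obvious ``cut and cap'' modification, with the collar used only to keep the new surface smoothly embedded in $N$ rather than sitting partially on $\partial N$.
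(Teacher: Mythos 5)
Your argument is correct and follows essentially the same path as the paper's proof: find an innermost circle of $\partial\Sigma$ on a sphere component of $\partial N$, cap it off with the disk it bounds, observe that this preserves genus, connectedness, and the relative homology class, and iterate. The only cosmetic differences are that you organize the innermost-disk argument via the dual tree of planar regions rather than the nesting of disks, and you push the cap into a collar and exhibit the $3$-chain explicitly, whereas the paper attaches the disk along $\partial N$ and then perturbs the resulting piecewise-smooth surface into the interior.
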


\begin{proof} Consider a connected, embedded $(\Sigma,\partial \Sigma)\subset (N,\partial N)$, and assume without loss of generality that $\Sigma$ meets $\partial N$ transversally along $\partial\Sigma$. Suppose $\partial \Sigma\neq \varnothing$ (otherwise take $\tilde{\Sigma}=\Sigma$), so that $\partial \Sigma$ consists of some number $\ell$ of embedded loops $\gamma_1,\ldots,\gamma_{\ell}$ in $\partial N$. 

Since $\partial N$ consists of spheres, it follows that each boundary component $\gamma_i$ of $\Sigma$ may be realized as the boundary of a topological disk $D_i\subset \partial N$. Moreover, since the distinct boundary components $\gamma_i$ do not intersect, it follows that for every pair $D_i,D_j$ of these disks, either $D_i\cap D_j=\varnothing$, or one disk is strictly contained in the other. In particular, it follows that we can find at least one disk--which, after relabeling, we call $D_{\ell}$--with the property that for every $i<\ell$, either $D_{\ell}\subset D_i$ or $D_{\ell}\cap D_i=\varnothing$.

Attaching this disk $D_{\ell}$ to $\Sigma$, we obtain a new (piecewise smooth) surface $\Sigma'$ such that $spt(\Sigma-\Sigma')\subset \partial N$, with genus $g(\Sigma')=g(\Sigma)$ and $\ell-1$ boundary components. We may then perturb $\Sigma'$ by a homotopy to obtain an embedded surface $\tilde{\Sigma}_{\ell}\subset N$ homologous to $\Sigma'$, meeting $\partial N$ transverally along $\partial \Sigma_{\ell}$. Repeating this process, we obtain a sequence $\tilde{\Sigma}_{\ell},\ldots,\tilde{\Sigma}_1$, terminating at a closed surface $\tilde{\Sigma}=\tilde{\Sigma}_1$ with genus $g(\tilde{\Sigma})=g(\Sigma)$, such that $[\tilde{\Sigma}]=[\Sigma]\in H_2(N,\partial N;\mathbb{Z})$.
\end{proof}

Armed with Lemma \ref{close.lem}, we can now easily prove that the map $\Phi: H_2(M;\mathbb{Z})\to H_2(N,\partial N;\mathbb{Z})$ does not decrease the Thurston norm.

\begin{claim}\label{phi.thurst} For every $\alpha\in H_2(M;\mathbb{Z})$, we have 
\begin{equation}
\|\Phi(\alpha)\|_{Th}\geq\|\alpha\|_{Th}.
\end{equation}
\end{claim}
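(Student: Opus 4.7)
The plan is to show that any Thurston-minimizing representative of $\Phi(\alpha)$ in $N$, once closed up via Lemma \ref{close.lem} and pushed forward to $M$ via $\Psi$, differs from $\alpha$ by a spherical class, and that this discrepancy can be absorbed by adjoining disjoint parallel copies of the spheres $S_i$ without increasing $\chi_-$.

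The key preliminary observation is that $\ker \Phi \subset \mathcal{S}$. To establish this, note that $\Psi$ restricts to a homeomorphism $N \setminus \partial N \to M \setminus \bigcup_i S_i$, and excision applied to a tubular neighborhood of $\bigcup_i S_i$ gives $H_2(M, M \setminus \bigcup_i S_i) = 0$, yielding surjectivity of $\Psi_*: H_2(N) \to H_2(M)$. Moreover, the compatibility $\Phi \circ \Psi_* = j$, where $j: H_2(N) \to H_2(N, \partial N)$ is the natural map of the pair, follows from the observation that if $\Sigma \subset N$ is closed and disjoint from $\partial N$, then $\Psi^{-1}(\Psi(\Sigma)) = \Sigma$. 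Since $j$ is surjective (as $H_1(\partial N) = 0$), any class in $\ker \Phi$ lifts via $\Psi_*$ to an element of $\ker j = \mathrm{im}(H_2(\partial N) \to H_2(N))$, and each boundary sphere maps under $\Psi_*$ to $\pm [S_i]$, so $\ker \Phi \subset \mathcal{S}$.

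With this in hand, I would take a Thurston minimizer $\Sigma \subset N$ for $\Phi(\alpha)$ and apply Lemma \ref{close.lem} componentwise to obtain a closed embedded surface $\tilde{\Sigma} \subset N$ of the same total genus, with $[\tilde{\Sigma}] = [\Sigma]$ in $H_2(N, \partial N;\mathbb{Z})$. The identity $-\chi(S) = 2g(S) + b(S) - 2$ for each component shows $\chi_-(\tilde{\Sigma}) \leq \chi_-(\Sigma)$; and since a closed embedded surface in a three-manifold with boundary cannot transversally meet $\partial N$, after a small perturbation we may assume $\tilde{\Sigma} \subset N \setminus \partial N$. Then $\Psi(\tilde{\Sigma})$ is a closed embedded surface in $M \setminus \bigcup_i S_i$ representing some $\beta \in H_2(M;\mathbb{Z})$, and since $\Psi^{-1}(\Psi(\tilde{\Sigma})) = \tilde{\Sigma}$, we have $\Phi(\beta) = [\tilde{\Sigma}] = \Phi(\alpha)$, whence $\beta - \alpha \in \ker \Phi \subset \mathcal{S}$.

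Writing $\alpha - \beta = \sum_i n_i [S_i]$, an embedded representative of $\alpha$ is given by the disjoint union of $\Psi(\tilde{\Sigma})$ with $|n_i|$ appropriately oriented parallel copies of $S_i$, taken inside a small tubular neighborhood of $S_i$ disjoint from $\Psi(\tilde{\Sigma})$. Since each $S_i$ contributes zero to $\chi_-$, this gives
\[
\|\alpha\|_{Th} \leq \chi_-(\Psi(\tilde{\Sigma})) = \chi_-(\tilde{\Sigma}) \leq \chi_-(\Sigma) = \|\Phi(\alpha)\|_{Th},
\]
as required. The main obstacle is the homological step of verifying $\ker \Phi \subset \mathcal{S}$: the relation $\Phi \circ \Psi_* = j$ and the surjectivity of $\Psi_*$ require careful identification of $\Phi$ as the Poincar\'e-dual of the pullback $\Psi^*: H^1(M) \to H^1(N)$, and the argument via the long exact sequences of the pairs $(M, M \setminus \bigcup_i S_i)$ and $(N, \partial N)$ together with excision. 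Once this is in place, the geometric construction via Lemma \ref{close.lem} and the disjoint-spheres correction is routine.
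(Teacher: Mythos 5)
Your proof is correct and follows essentially the same route as the paper: close up a Thurston minimizer for $\Phi(\alpha)$ via Lemma \ref{close.lem}, push it forward to $M$, and correct the homology class by adjoining parallel copies of the $S_i$, which cost nothing in $\chi_-$. The paper simply asserts that the discrepancy between $\alpha$ and the pushforward class lies in the span of the $[S_i]$; your verification that $\ker\Phi\subset\mathcal{S}$ via surjectivity of $\Psi_*$, the identity $\Phi\circ\Psi_*=j$, and the long exact sequence of $(N,\partial N)$ supplies the precise justification for that step and is a welcome clarification.
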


\begin{proof} To see this, consider an embedded surface $\Sigma\subset N$ representing $\Phi(\alpha)\in H_2(N,\partial N;\mathbb{Z})$, such that
$$\chi_-(\Sigma)=\|\Phi(\alpha)\|_{Th}.$$
Applying the construction of Lemma \ref{close.lem} to each connected component of $\Sigma$ (an operation which evidently decreases $\chi_-$), we may assume moreover that $\Sigma$ is closed, and does not intersect the boundary spheres $\partial N$. 

Finally, note that we can produce a surface $\Gamma\subset M$ representing the class $\alpha\in H_2(M;\mathbb{Z})$ by adding some finite combination of the spheres $S_1,\ldots, S_k$ to the closed representative $\Sigma\subset N\setminus \partial N=M\setminus \bigcup_{j=1}^kS_j$ of $\Phi(\alpha)$. Since the spheres make no contribution to $\chi_-(\Gamma)$, we then see that
$$\|\alpha\|_{Th}\leq \chi_-(\Gamma)=\chi_-(\Sigma)=\|\Phi(\alpha)\|_{Th},$$
as desired.
\end{proof}

\begin{remark}\label{no.gen.0} As another consequence of Lemma \ref{close.lem}, we observe that any surface of genus zero in $N$ defines a trivial class in $H_2(N,\partial N;\mathbb{Z})$. Indeed, given an embedded surface $\Sigma \subset N$ of genus zero, by Lemma \ref{close.lem}, we may always find an embedded sphere $\tilde{\Sigma}\subset N$ homologous to $\Sigma$ in $H_2(N,\partial N)$. On the other hand, since the spheres $S_1,\ldots, S_k$ generate $\mathcal{S}$, it follows that this sphere $\tilde{\Sigma}$ must be homologous to some combination of $S_1,\ldots, S_k$ in $M$; hence $\tilde{\Sigma}$ is homologous to some combination of the boundary components of $\partial N$, and consequently
$$[\Sigma]=[\tilde{\Sigma}]=0\in H_2(N,\partial N;\mathbb{Z}).$$
\end{remark}

Combining the results of Lemma \ref{phi.harm}, Claim \ref{phi.thurst}, and the preceding remark with Corollary \ref{mean.cvx.cor}, the proof of Theorem \ref{thurst.thm.2} is now straightforward.

\begin{proof}[Proof of Theorem \ref{thurst.thm.2}]

Given a class $\alpha \in H_2(M;\mathbb{Z})$, consider its image $\Phi(\alpha)$ under the map $\Phi: H_2(M;\mathbb{Z})\to H_2(N,\partial N;\mathbb{Z})$ described above. By Remark \ref{no.gen.0}, we see that any surface of genus zero in $N$ induces a trivial cycle in $H_2(N,\partial N;\mathbb{Z})$, and since $N$ has minimal boundary, we may apply Corollary \ref{mean.cvx.cor} to conclude that
\begin{equation}\label{phi.bds}
4\pi \|\Phi(\alpha)\|_{Th}\leq \|\Phi(\alpha)\|_H\|R_N^-\|_{L^2(N)}=\|\Phi(\alpha)\|_H\|R_M^-\|_{L^2(M)}.
\end{equation}
Combining this relation with the estimates of Lemma \ref{phi.harm} and Claim \ref{phi.thurst} relating the harmonic and Thurston norms of $\Phi(\alpha)$ to those of $\alpha$, we immediately arrive at the desired bound
\begin{equation}\label{main.est}
4\pi \|\alpha\|_{Th}\leq \|\Phi(\alpha)\|_H\|R_M^-\|_{L^2(M)}.
\end{equation}

Finally, note that if $\alpha$ cannot be represented by spheres, then $\Phi(\alpha)\neq 0$, so if equality holds in \eqref{main.est}, then equality holds in \eqref{phi.bds} as well, and the rigidity statement in Corollary \ref{mean.cvx.cor} implies that $N$ is covered isometrically by a cylinder $\Sigma \times \mathbb{R}$ over a surface $\Sigma$ of constant nonpositive curvture with (possibly empty) geodesic boundary. Note, moreover, that if this surface $\Sigma$ has nonempty boundary, this would imply that the boundary $\partial N$ is made up of tori--which of course cannot be the case, since $\partial N$ consists of spheres by construction. Thus, if equality holds in \eqref{main.est}, it follows that $M=N$ is covered isometrically by the cylinder $\Sigma\times \mathbb{R}$
over a closed surface of constant nonpositive curvature, as claimed.

\end{proof}

\end{document}